	\newcommand{\ftn}[3]{ #1 \colon #2 \rightarrow #3 }
	\newcommand{\multialg}[1]{\mathcal{M}(#1)}
	\newcommand{\corona}[1]{\mathcal{Q}(#1)}
	\newcommand{\C}{\mathbb{C}}
	\newcommand{\N}{\mathbb{N}}
	\newcommand{\K}{\mathbb{K}}
		\newcommand{\I}{\mathfrak{I}}
	\newcommand{\E}{\mathfrak{E}}
	\newcommand{\F}{\mathfrak{F}}
	\newcommand{\A}{\mathfrak{A}}
\renewcommand{\phi}{\varphi}
	\newcommand{\kk}{\operatorname{KK}}
	\newcommand{\id}{\operatorname{id}}
	\theoremstyle{plain}
	\newtheorem{thm}{Theorem}
	\newtheorem{proposition}[thm]{Proposition}
	\theoremstyle{definition}
	\newtheorem{example}[thm]{Example}
		\newtheorem{observation}[thm]{Observation}
	\numberwithin{equation}{section}
	\numberwithin{figure}{section}
\begin{document}
\author{S{\o}ren Eilers}
        \address{Department of Mathematical Sciences \\
        University of Copenhagen\\
        Universitetsparken~5 \\
        DK-2100 Copenhagen, Denmark}
        \email{eilers@math.ku.dk }
        \author{Gunnar Restorff}
\address{Faculty of Science and Technology\\University of Faroe 
Islands\\N\'oat\'un 3\\FO-100 T\'orshavn\\Faroe Islands}
\email{gunnarr@setur.fo}

	\author{Efren Ruiz}
        \address{Department of Mathematics\\University of Hawaii,
Hilo\\200 W. Kawili St.\\
Hilo, Hawaii\\
96720-4091 USA}
        \email{ruize@hawaii.edu}
        \date{\today}
	

	\keywords{Classification, Extensions, Graph algebras}
	\subjclass[2000]{Primary: 46L35, 37B10 Secondary: 46M15, 46M18}

\title{Corrigendum to "Classifying $C^{*}$-algebras with both finite and infinite subquotients" }

\begin{abstract}
As recently pointed out by Gabe, a fundamental paper by Elliott and Ku\-ce\-rov\-sky concerning the absorption theory for $C^*$-algebras contains an error, and as a consequence we must report that 
Lemma~4.5 in \cite{segrer:ccfis} is not true as stated.   In this corrigendum, we prove an adjusted statement and explain why the error has no consequences to the main results of \cite{segrer:ccfis}. In particular, it is noted that all the authors' claims concerning Morita equivalence or stable isomorphism of graph $C^*$-algebras remain correct as stated.
\end{abstract}

\maketitle

In this note, we give a counterexample to \cite[Lemma~4.5]{segrer:ccfis} and we make the necessary changes to make the statement true.  Before doing this, we first explain where the error occurred.  In the proof of \cite[Lemma~4.5]{segrer:ccfis} we used \cite[Corollary~16]{EK:purelylarge} to conclude that a non-unital, purely large extension is nuclear absorbing.  This was the key component to prove \cite[Lemma~4.5]{segrer:ccfis}.  However, it was recently pointed out by James Gabe in \cite{gabe:absorb} that \cite[Corollary~16]{EK:purelylarge} is false in general;  Gabe showed that there exists a non-unital extension that is purely large but not nuclear absorbing.  The error occurs for non-unital extensions $0 \to \I \to \E \to \A \to 0$ with $\A$ unital.  We can use \cite[Example~1.1]{gabe:absorb}, to find a counterexample to \cite[Lemma~4.5]{segrer:ccfis} as follows:

\begin{example}\label{ex: counter example}
Let $p$ be a projection in $\mathbb{B}( \ell^{2} )$ such that $p$ and $1_{ \mathbb{B}( \ell^{2} ) } - p$ are norm-full, properly infinite projections in $\mathbb{B}( \ell^{2} )$.  Let $
\mathfrak{e} \colon 0 \to \K \oplus \K \to \E \to \C \to 0$ be the trivial extension induced by the $*$-homomorphism which maps $\lambda \in \C$ to $\lambda  ( p \oplus 1_{ \mathbb{B}( \ell^{2} ) } )$.  Since $p$ and $1_{ \mathbb{B}( \ell^{2} ) } - p$ are norm-full, properly infinite projections in $\mathbb{B}( \ell^{2} )$, we have that $p$ and $1_{ \mathbb{B}( \ell^{2} ) } - p$ are not elements of $\K$.  Therefore, $1_{ \mathbb{B}( \ell^{2} ) } \oplus 1_{ \mathbb{B}( \ell^{2} ) } - p \oplus 1_{ \mathbb{B}( \ell^{2} ) } = ( 1_ {\mathbb{B}( \ell^2 )} - p ) \oplus 0$ is not an element of $\K \oplus \K$.  Hence, $\mathfrak{e}$ is a non-unital extension.  By \cite[Example~1.1]{gabe:absorb}, $\mathfrak{e}$ is a purely large, full extension that is not nuclear absorbing.  Therefore, $\mathfrak{e}$ is not absorbing since $\C$ is a nuclear $C^*$-algebra.  Therefore, $\mathfrak{e}$ can not be isomorphic to an absorbing extension. 

We now construct a non-unital, absorbing extension $\mathfrak{f} \colon 0 \to \K \oplus \K \to \F \to \C \to 0$ such that $[ \tau_{ \mathfrak{e} } ] = [ \tau_{ \mathfrak{f} } ]$ in $\kk^{1} ( \C , \K \oplus \K )$, where $\tau_{\mathfrak{e}}$ and $\tau_{\mathfrak{f}}$ are the Busby invariants of $\mathfrak{e}$ and $\mathfrak{f}$ respectively.  Let $q$ be a projection in $\mathbb{B}( \ell^{2} )$ such that $q$ and $1_{ \mathbb{B}( \ell^{2} ) } - q$ are norm-full, properly infinite projections in $\mathbb{B}( \ell^{2} )$.  Let $\mathfrak{f} \colon 0 \to \K \oplus \K \to \F \to \C \to 0$ be the trivial extension induced by the $*$-homomorphism which maps $\lambda \in \C$ to $\lambda  ( p \oplus q )$.  Using a similar argument as in the case for $\mathfrak{e}$, we have that $\mathfrak{f}$ is a non-unital extension.  By construction, $\mathfrak{f}$ is a full extension and hence, $\mathfrak{f}$ is a purely large extension since $\K \oplus \K$ has the corona factorization property.  Since $1_{ \mathbb{B}( \ell^{2} ) } - p$ and $1_{ \mathbb{B}( \ell^{2} ) } - q$ are norm-full, properly infinite projections in $\mathbb{B}( \ell^{2} )$, we have that $1_{ \mathbb{B}( \ell^{2} ) } \oplus 1_{ \mathbb{B}( \ell^{2} ) } - p \oplus q = ( 1_{ \mathbb{B}( \ell^{2} ) } - p ) \oplus ( 1_{ \mathbb{B}( \ell^{2} ) } - q)$ is a norm-full, properly infinite projection in $\mathbb{B}( \ell^{2} ) \oplus \mathbb{B}( \ell^{2} )$.  Moreover, we have that $(1_{ \mathbb{B}( \ell^{2} ) } \oplus 1_{ \mathbb{B}( \ell^{2} ) } - p \oplus q) \F \subseteq \K \oplus \K$.  Therefore, by \cite[Theorem~2.3]{gabe:absorb}, $\mathfrak{f}$ is a nuclear absorbing extension, and hence absorbing since $\C$ is nuclear.  Since $\mathfrak{e}$ and $\mathfrak{f}$ are trivial extensions, we have that $[ \tau_{\mathfrak{e}} ] = [ \tau_{\mathfrak{f}} ] = 0$ in $\kk^{1} ( \C , \K \oplus \K )$.  Thus we have proved the existence of $\mathfrak{f}$.  

Since $\mathfrak{e}$ is not an absorbing extension and $\mathfrak{f}$ is an absorbing extension, we have that $\mathfrak{e}$ is not isomorphic to $\mathfrak{f}$.  Note that
\[
\kk ( \id_{\C} ) \times [ \tau_{\mathfrak{f}} ] = [ \tau_\mathfrak{f} ] = [ \tau_\mathfrak{e} ] = [ \tau_{\mathfrak{e}} ] \times \kk ( \id_{ \K \oplus \K } )
\]  
in $\kk^{1} ( \C , \K \oplus \K )$.  We claim that $\E$ is not isomorphic to $\F$.  Suppose there exists a $*$-isomorphism $\ftn{ \phi }{ \E }{ \F }$.  Let $\pi_{\mathfrak{f}}$ be the canonical surjective $*$-homomorphism from $\F$ to $\C$.  Since $\phi$ and $\pi_{\mathfrak{f}}$ are surjective, we have that $(\pi_{\mathfrak{f}} \circ \phi )( \K \oplus \K )$ is an ideal of $\C$.  So, $(\pi_{\mathfrak{f}} \circ \phi )( \K \oplus \K ) = 0$ or $(\pi_{\mathfrak{f}} \circ \phi) ( \K \oplus \K ) = \C$.  Since $\K \oplus \K$ has exactly four ideals, $0, \K \oplus 0, 0 \oplus \K$, and $\K \oplus \K$, we have that $(\pi_{\mathfrak{f}} \circ \phi) ( \K \oplus \K )$ is either isomorphic to $0$, $\K$, or $\K \oplus \K$.  Hence, $(\pi_{\mathfrak{f}} \circ \phi) ( \K \oplus \K ) = 0$ which implies that $\phi$ maps $\K \oplus \K$ to $\K \oplus \K$.  Similarly, $\phi^{-1}$ maps $\K \oplus \K$ to $\K \oplus \K$.  So, $\phi$ induces an isomorphism of extensions from $\mathfrak{e}$ to $\mathfrak{f}$, which is a contradiction.  Thus, $\E$ is not isomorphic to $\F$.
\end{example}

We correct the error in \cite[Lemma~4.5]{segrer:ccfis} with Proposition~\ref{p: isomorphism from kk} below.  Of particular interest to us in \cite{segrer:ccfis} is the case that the quotient algebra is non-unital.  The main results of \cite{segrer:ccfis} deal with $C^{*}$-algebras that are stable.  Since the quotient of a stable $C^*$-algebra is a stable $C^*$-algebra, we always apply \cite[Lemma~4.5]{segrer:ccfis} to extensions $0 \to \I \to \E \to \A \to 0$ where the quotient algebra $\A$ is a non-unital $C^{*}$-algebra.  So, in this particular case, \cite[Corollary~16]{EK:purelylarge} holds as shown in \cite[Theorem~2.1]{gabe:absorb}.  Thus, using Proposition~\ref{p: isomorphism from kk} in place of \cite[Lemma~4.5]{segrer:ccfis}, the main results of \cite{segrer:ccfis} hold verbatim.

\begin{proposition}\label{p: isomorphism from kk}
For $i =1, 2$, let $\mathfrak{e}_i : 0 \to \I_i \rightarrow \E_i \rightarrow \A_i \to 0$ be a non-unital, full extension of separable, nuclear $C^*$-algebras.  Assume that $\I_i$ is stable and has the corona factorization property.  Suppose there exist $*$-isomorphisms $\phi_2 \colon \A_1 \rightarrow \A_2$ and $\phi_0 \colon \I_1 \rightarrow \I_2$ such that $\mathrm{KK} ( \phi_2 ) \times [ \tau_{\mathfrak{e}_2} ] = [ \tau_{\mathfrak{e}_1} ] \times \mathrm{KK} ( \phi_0 )$.  If 
\begin{itemize}
\item[(i)] $\A_1$ is non-unital or 

\item[(ii)] $\I_1$ is either $\mathbb{K}$ or a purely infinite simple $C^*$-algebra
\end{itemize}
then there exist $*$-isomorphisms $\ftn{ \psi_{1} }{ \E_{1} }{ \E_{2} }$ and $\ftn{ \psi_{0} }{ \I_{1} }{ \I_{2} }$ such that the diagram
\[
\xymatrix{
0 \ar[r] & \I_{1} \ar[r] \ar[d]^{\psi_{0} } & \E_{1} \ar[r] \ar[d]^{\psi_{1}} & \A_{1} \ar[r] \ar[d]^{\phi_{2}}& 0 \\
0 \ar[r] & \I_{2} \ar[r] & \E_{2} \ar[r] & \A_{2} \ar[r] & 0
}
\] 
is commutative and such that $\kk ( \psi_{0} ) = \kk ( \phi_{0} )$.
\end{proposition}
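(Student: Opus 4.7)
The strategy is to reduce the statement to the uniqueness half of the absorption theorem: two (nuclear) absorbing full extensions of a separable nuclear $C^{*}$-algebra by a stable $\sigma$-unital $C^{*}$-algebra that agree in $\kk^{1}$ are strongly unitarily equivalent, i.e.\ equivalent through a unitary in the multiplier algebra of the ideal, and any such equivalence lifts to a $*$-isomorphism of the total extensions. Rewriting the hypothesis $\kk(\phi_{2}) \times [\tau_{\mathfrak{e}_{2}}] = [\tau_{\mathfrak{e}_{1}}] \times \kk(\phi_{0})$ at the level of Busby invariants shows that the pullback $\mathfrak{f}_{2}$ of $\mathfrak{e}_{2}$ along $\phi_{2}$ and the pushout $\mathfrak{f}_{1}$ of $\mathfrak{e}_{1}$ along $\phi_{0}$ are two extensions of $\A_{1}$ by $\I_{2}$ representing the same class in $\kk^{1}(\A_{1}, \I_{2})$.

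First I would verify that $\mathfrak{f}_{1}$ and $\mathfrak{f}_{2}$ are full and absorbing. Fullness descends from $\mathfrak{e}_{1}$ and $\mathfrak{e}_{2}$ since $\phi_{0}, \phi_{2}$ are isomorphisms. Absorption is handled by cases: in (i), since $\A_{1}$ is non-unital, Gabe's \cite[Theorem~2.1]{gabe:absorb} applies directly to each full extension with stable corona-factorization-property ideal; in (ii), classical Brown--Douglas--Fillmore/Voiculescu absorption gives the conclusion for $\I_{1} \cong \K$, while the Kirchberg--R\o rdam absorption theorem gives it when $\I_{1}$ is purely infinite simple. In all three subcases absorption holds regardless of whether $\A_{1}$ is unital.

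Next I would apply the uniqueness of absorbing extensions to produce a unitary $u \in \multialg(\I_{2})$ whose image in $\corona(\I_{2})$ conjugates the Busby invariant of $\mathfrak{f}_{1}$ into that of $\mathfrak{f}_{2}$. This equivalence lifts to a $*$-isomorphism of the total algebras of $\mathfrak{f}_{1}$ and $\mathfrak{f}_{2}$, which, combined with the canonical identifications of these total algebras with $\E_{1}$ and $\E_{2}$, yields $\ftn{\psi_{1}}{\E_{1}}{\E_{2}}$ making the diagram commute and acting as $\psi_{0} = \mathrm{Ad}(u) \circ \phi_{0}$ on ideals. Because $\I_{2}$ is separable and stable, the unitary group $U(\multialg(\I_{2}))$ is connected (Mingo), so $\mathrm{Ad}(u)$ is homotopic to $\id_{\I_{2}}$, giving $\kk(\psi_{0}) = \kk(\mathrm{Ad}(u)) \cdot \kk(\phi_{0}) = \kk(\phi_{0})$.

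The main obstacle is securing absorption in case (ii), since the Elliott--Kucerovsky route via purely large extensions fails for unital quotients (Example~\ref{ex: counter example}). The restrictions on $\I_{1}$ in (ii) are precisely what is needed to fall back on older absorption theorems (BDF/Voiculescu and Kirchberg--R\o rdam) that hold without any non-unitality hypothesis on the quotient. Once absorption is secured, the remainder is bookkeeping with Busby invariants together with the fact that inner automorphisms of stable $C^{*}$-algebras are $\kk$-trivial.
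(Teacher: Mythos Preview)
Your overall strategy matches the paper's exactly: form the pushforward $\mathfrak{f}_{1}$ of $\mathfrak{e}_{1}$ along $\phi_{0}$ and the pullback $\mathfrak{f}_{2}$ of $\mathfrak{e}_{2}$ along $\phi_{2}$, show $[\tau_{\mathfrak{f}_{1}}]=[\tau_{\mathfrak{f}_{2}}]$ in $\kk^{1}(\A_{1},\I_{2})$, establish that both are absorbing, deduce strong unitary equivalence via a multiplier unitary $U$, and read off $\kk(\psi_{0})=\kk(\phi_{0})$ from the $\kk$-triviality of $\mathrm{Ad}(U)$. The only substantive divergence is how you secure absorption in case~(ii) when $\A_{1}$ is unital.

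Your appeal there to ``classical BDF/Voiculescu'' and a ``Kirchberg--R\o rdam absorption theorem'' does not quite close the argument. The classical BDF/Voiculescu framework treats unital essential extensions (or extensions with non-unital quotient); the non-unital-extension-with-unital-quotient situation is exactly the case Elliott--Kucerovsky was designed to cover and where the error occurs, so you cannot simply fall back on pre-Elliott--Kucerovsky statements without saying more, and there is no standard ``Kirchberg--R\o rdam absorption theorem'' that directly asserts absorption for arbitrary non-unital full extensions by a stable purely infinite simple ideal. The paper instead invokes Gabe's \cite[Theorem~2.3]{gabe:absorb} uniformly: because $\I_{i}$ is $\K$ or stable purely infinite simple, the corona $\corona{\I_{i}}$ is purely infinite simple, so the nonzero projection $1_{\corona{\I_{i}}}-\tau_{\mathfrak{e}_{i}}(1_{\A_{i}})$ dominates a subprojection Murray--von~Neumann equivalent to $1$, which lifts by \cite[Lemma~2.8]{SZ: Multiplier projections} to a projection $p\in\multialg{\I_{i}}\setminus\I_{i}$; every such $p$ is automatically norm-full and properly infinite, and $p\,\sigma_{\mathfrak{e}_{i}}(\E_{i})\subseteq\I_{i}$, so Gabe's criterion gives nuclear absorption. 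This single argument handles both subcases of~(ii) and isolates precisely why the hypothesis on $\I_{1}$ is the right one: it forces the corona to be simple, hence the complementary projection is full---the failure of which is exactly what drives Example~\ref{ex: counter example}.
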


\begin{proof}
Throughout the proof, $\tau_{\mathfrak{e}_{i}}$ will denote the Busby invariant of $\mathfrak{e}_{i}$.  We will also use the fact that a nuclear absorbing extension with quotient algebra nuclear is absorbing.  We will first show that $\mathfrak{e}_{i}$ is an absorbing extension.  Since the extension is full and $\I_{i}$ has the corona factorization property, we have that $\mathfrak{e}_{i}$ is a purely large extension.  Suppose $\A_{1}$ is non-unital.  Since $\A_{1} \cong \A_{2}$, we have that $\A_{2}$ is non-unital.  By \cite[Theorem~2.1]{gabe:absorb}, the extension $\mathfrak{e}_{i}$ is a nuclear absorbing extension, and hence an absorbing extension.  

Suppose $\I_{1}$ is either $\mathbb{K}$ or a purely infinite simple $C^{*}$-algebra.  Since $\I_1 \cong \I_2$, we have that $\I_{2}$ is either $\mathbb{K}$ or a purely infinite simple $C^{*}$-algebra.  So, $\I_{i}$ is the unique non-trivial ideal of $\multialg{\I_{i}}$.  We have two cases to deal with, $\A_{1}$ is non-unital or $\A_{1}$ is unital.  If $\A_{1}$ is non-unital, then so is  $\A_{2}$, and hence $\mathfrak{e}_{i}$ is absorbing from the previous case.  Suppose $\A_{1}$ is unital, then again so is $\A_{2}$.  Recall that there exists a $*$-homomorphism $\ftn{ \sigma_{\mathfrak{e}_{i}}}{\A_{i}}{ \multialg{\I_{i}} }$ such that the diagram 
\[
\xymatrix{
0 \ar[r] & \I_{i} \ar[r] \ar@{=}[d]& \E_{i} \ar[r]^{\pi_{i}} \ar[d]^{\sigma_{e_{i}}} &  \A_{i} \ar[r] \ar[d]^{\tau_{e_{i}}}& 0 \\
0 \ar[r] & \I_{i} \ar[r] & \multialg{\I_{i}} \ar[r]_{\pi} & \corona{\I_{i}} \ar[r] & 0
}
\]
is commutative.  Since $\mathfrak{e}_{i}$ is a non-unital extension, we have that $1_{ \corona{ \I_{i} } } \neq \tau_{ \mathfrak{e}_{i} } ( 1_{\A_{i}} )$.  We claim that there exists a projection $p$ in $\multialg{\I_{i}}$ such that $p$ is not an element of $\I_{i}$ and $\pi ( p ) \leq 1_{ \corona{ \I_{i} } } - \tau_{\mathfrak{e}_{i}} ( 1_{\A_{i}} )$.  Since $\corona{ \I_{i} }$ is a purely infinite, simple $C^{*}$-algebra, there exists a non-zero projection $q$ in $\corona{\I_{i}}$ such that $q \leq   1_{ \corona{ \I_{i} } } - \tau_{e_{i}} ( 1_{\A_{i}} )$ and $q$ is Murray-von Neumann equivalent to $1_{ \corona{ \I_{i} } } = \pi ( 1_{ \multialg{\I_{i}} } )$.  By \cite[Lemma~2.8]{SZ: Multiplier projections}, $q$ lifts to a projection $p$ in $\multialg{\I_{i}}$.  Since $q \neq 0$, we have that $p$ is not an element of $\I_{i}$.  Thus proving the claim.

Since $\I_{i}$ is either $\mathbb{K}$ or a purely infinite simple $C^{*}$-algebra, we have that every projection $e$ in $\multialg{\I_{i}}\setminus \I_{i}$ is norm-full and properly infinite.  Hence, $p$ is a norm-full, properly infinite projection.  Since $\pi (p) \leq 1_{ \corona{ \I_{i} } } - \tau_{\mathfrak{e}_{i}} ( 1_{\A_{i}} )$, we have that $\pi (p) \tau_{ e_{i } } (a) = 0$ for all $a \in \A_{i}$.  Hence, $p \sigma_{\mathfrak{e}_{i}} ( \E_{i} ) \subseteq \I_{i}$.  By \cite[Theorem~2.3]{gabe:absorb}, $\mathfrak{e}_{i}$ is a nuclear absorbing extension and hence an absorbing extension.  Thus we have proved that $\mathfrak{e}_{i}$ is an absorbing extension for all cases.

Let $\mathfrak{f}_{1}$ be the extension obtained by pushing forward the extension $\mathfrak{e}_{1}$ via the $*$-isomorphism $\phi_{0}$ and let $\mathfrak{f}_{2}$ be the extension obtained by pulling-back the extension $\mathfrak{e}_{2}$ via the $*$-isomorphism $\phi_{2}$.  Let $\widetilde{\E}_{1}$ and $\widetilde{\E}_{2}$ be the $C^{*}$-algebras induced by $\mathfrak{f}_{1}$ and $\mathfrak{f}_{2}$ respectively.  Let $\tau_{\mathfrak{f}_{i}}$ be the Busby invariant for the extension $\mathfrak{f}_{i}$.   We claim that $[ \tau_{ \mathfrak{f}_{1}} ] = [ \tau_{ \mathfrak{f}_{2} } ]$ in $\kk^1 ( \A_1 , \I_2 )$.  

By the universal property of the push forward, there exists a $*$-isomorphism $\ftn{\alpha}{ \E_{1} }{ \widetilde{\E}_{1} }$ making the diagram commutative
\[
\xymatrix{0  \ar[r] & \I_{1} \ar[r] \ar[d]^{\phi_{0}} & \E_{1} \ar[r] \ar[d]^{\alpha} & \A_{1} \ar[r] \ar@{=}[d] & 0 \\
0 \ar[r] & \I_{2} \ar[r] & \widetilde{\E}_{1} \ar[r] & \A_{1} \ar[r] & 0.
}
\]
Using the universal property of the pull-back, there exists a $*$-isomorphism $\ftn{ \beta }{ \widetilde{\E}_{2} }{ \E_{2} }$ making the diagram commutative
\[
\xymatrix{0  \ar[r] & \I_{2} \ar[r] \ar@{=}[d] & \widetilde{\E}_{2} \ar[r] \ar[d]^{\beta} & \A_{1} \ar[r] \ar[d]^{\phi_{2}} & 0 \\
0 \ar[r] & \I_{2} \ar[r] & \E_{2} \ar[r] & \A_{2} \ar[r] & 0.
}
\]
By \cite[Proposition~1.1]{MK: Extensions Kirchberg},
\[
 [ \tau_{ \mathfrak{e}_{1}} ] \times \kk ( \phi_{0} ) =    [ \tau_{ \mathfrak{f}_{1}} ]
 \]
 and 
\[
[ \tau_{ \mathfrak{f}_{2}} ]  = \kk ( \phi_{2} ) \times  [ \tau_{ \mathfrak{e}_{2}} ].
\]
Thus, $[ \tau_{ \mathfrak{f}_{1}} ] = [ \tau_{ \mathfrak{e}_{1}} ] \times \kk ( \phi_{0} ) = \kk ( \phi_{2} ) \times  [ \tau_{ \mathfrak{e}_{2}} ] = [ \tau_{ \mathfrak{f}_{2} } ]$ in $\kk^{1} ( \A_{1} , \I_{2} )$, proving the claim that $[ \tau_{ \mathfrak{f}_{1}} ] = [ \tau_{ \mathfrak{f}_{2} } ]$ in $\kk^1 ( \A_1 , \I_2 )$

Since $\A_{1}$ is a nuclear, separable $C^{*}$-algebra and since $[ \tau_{ \mathfrak{f}_{1}} ] = [ \tau_{ \mathfrak{f}_{2} } ]$ in $\kk^1 ( \A_1 , \I_2 )$, there are trivial extensions $\ftn{ \sigma_{1}, \sigma_{2} }{ \A_{1}  }{\corona{\I_{2} }}$ and there exists a unitary $v \in \multialg{\I_{2} }$ such that $\mathrm{Ad}(\pi(v)) \left( \tau_{ \mathfrak{f}_{1} } \oplus \sigma_{1} \right) = \tau_{ \mathfrak{f}_{2}} \oplus \sigma_{2}$, where $\pi$ is the canonical surjective $*$-homomorphism from $\multialg{\I_{2} }$ onto $\corona{ \I_{2} }$.  Since $\mathfrak{e}_{i}$ is an absorbing extension, we have that $\mathfrak{f}_{i}$ is an absorbing extension.  Hence, there exists a unitary $v_{i} \in \multialg{ \I_{2} }$ such that $\mathrm{Ad} ( \pi ( v_{i} ) ) \circ \left( \tau_{ \mathfrak{f}_{i} } \oplus \sigma_{i} \right) = \tau_{ \mathfrak{f}_{i} }$.  Set $U = v_{2} v v_{1}^{*}$.  A computation shows that $\mathrm{Ad} ( \pi(U) ) \circ \tau_{ \mathfrak{f}_{1}} = \tau_{ \mathfrak{f}_{2}}$.  Therefore, $\mathrm{Ad} ( U)$ induces $*$-isomorphisms $\ftn{ \lambda_{0} }{ \I_{2} }{ \I_{2} }$ and $\ftn{ \lambda_{1} }{ \widetilde{\E}_{1} }{ \widetilde{\E}_{2} }$ such that the diagram
\[
\xymatrix{
0 \ar[r] & \I_{2} \ar[r] \ar[d]^{\lambda_{0}} & \widetilde{\E}_{1} \ar[r] \ar[d]^{\lambda_{1}} & \A_{1} \ar[r] \ar@{=}[d]& 0\\
0 \ar[r] & \I_{2} \ar[r] & \widetilde{\E}_{2} \ar[r] & \A_{1} \ar[r] & 0 
}
\]
is commutative and $\kk ( \lambda_{0} ) = \kk ( \id_{\I_{2}} )$.

Set $\psi_{0} = \lambda_{0} \circ \phi_{0}$ and $\psi_{1} = \beta \circ \lambda_{1} \circ \alpha$.  Then $\psi_{0}$ and $\psi_{1}$ satisfies the desired properties.
\end{proof}

We end by commenting on other results by the authors that relied on \cite[Corollary~16]{EK:purelylarge} and/or \cite[Lemma~4.5]{segrer:ccfis}.  

\begin{observation}
As proved in \cite[Theorem~2.1]{gabe:absorb} that the last part of \cite[Corollary~16]{EK:purelylarge} holds.  More precisely, for an extension $\mathfrak{e}\colon 0 \to \I \to \E \to \A \to 0$ with $\A$ non-unital, $\mathfrak{e}$ is nuclear absorbing if and only if $\mathfrak{e}$ is purely large.  Consequently, \cite[Corollary~16]{EK:purelylarge} holds when dealing with extensions of stable $C^{*}$-algebras since a quotient of a stable $C^*$-algebra is stable.  Therefore, the results of \cite{ERRshift} and \cite{ERRlinear} hold since both articles consider extensions of stable $C^{*}$-algebras.
\end{observation}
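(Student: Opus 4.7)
The plan rests almost entirely on invoking Gabe's recent work and a short structural observation about stability. The first sentence of the observation is immediate from \cite[Theorem~2.1]{gabe:absorb}: for any extension $0 \to \I \to \E \to \A \to 0$ of separable $C^{*}$-algebras in which $\A$ is non-unital, $\mathfrak{e}$ is nuclear absorbing if and only if $\mathfrak{e}$ is purely large. This is exactly the portion of the original \cite[Corollary~16]{EK:purelylarge} that survives Gabe's counterexample, and it requires no additional argument beyond citation.

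Next I would reduce the stable case to Gabe's non-unital-quotient case. The key observation is that every non-zero stable $C^{*}$-algebra $\A$ is automatically non-unital: from $\A \cong \A \otimes \K$ one obtains an approximate unit in $\A$ consisting of an infinite increasing sequence of mutually orthogonal non-zero projections, which precludes the existence of a unit. Moreover, tensoring with $\K$ is exact and commutes with taking quotients, so a quotient of a stable $C^{*}$-algebra is again stable. Consequently, in any extension $0 \to \I \to \E \to \A \to 0$ whose terms are stable $C^{*}$-algebras, the quotient $\A$ is either zero (a trivial case) or non-unital, and in both cases \cite[Theorem~2.1]{gabe:absorb} supplies the desired equivalence between purely large and nuclear absorbing.

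Finally I would verify that \cite{ERRshift} and \cite{ERRlinear} are covered by this reduction. The only real obstacle here is bookkeeping rather than mathematics: one must go through each invocation of \cite[Corollary~16]{EK:purelylarge} or \cite[Lemma~4.5]{segrer:ccfis} in those two papers and confirm that the extension in question is indeed an extension of stable $C^{*}$-algebras, so that the previous paragraph applies. Since both articles are framed from the outset in the stable setting and the cited results are applied only within that framework, this verification is routine inspection, and the main conclusions of \cite{ERRshift} and \cite{ERRlinear} then follow verbatim with the patched absorption result replacing the original one.
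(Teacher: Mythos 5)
Your proposal is correct and follows essentially the same route as the paper, which justifies this observation simply by citing \cite[Theorem~2.1]{gabe:absorb} for non-unital quotients and noting that quotients of stable $C^{*}$-algebras are stable (hence non-unital). Your added details --- that a non-zero stable algebra is non-unital and that ideals and quotients of stable algebras remain stable --- merely make explicit what the paper takes for granted.
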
 
 
 \begin{observation} 
In \cite[Theorem~2.6]{rr_cexpure}, the second and third named author used \cite[Corollary~16]{EK:purelylarge} for extensions $0 \to \I \to \E \to \A \to 0$ where $\I$ is a purely infinite simple $C^{*}$-algebra.  Thus, using Proposition~\ref{p: isomorphism from kk} in place of \cite[Corollary~16]{EK:purelylarge}, we have that \cite[Theorem~2.6]{rr_cexpure} holds as stated.
\end{observation}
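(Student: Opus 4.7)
The plan is to prove that each $\mathfrak{e}_i$ is an absorbing extension and then to transport $\mathfrak{e}_1$ onto $\mathfrak{e}_2$ by a unitary-intertwining argument. Fullness of $\mathfrak{e}_i$ together with the corona factorization property of $\I_i$ immediately gives that $\mathfrak{e}_i$ is purely large. In case~(i), $\A_2\cong\A_1$ is also non-unital, so the non-unital-quotient version of Elliott--Kucerovsky proved in \cite[Theorem~2.1]{gabe:absorb} yields that $\mathfrak{e}_i$ is nuclear absorbing, and therefore absorbing because $\A_i$ is nuclear.

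For case~(ii), when $\A_i$ is unital but $\mathfrak{e}_i$ is still non-unital, the plan is to produce a norm-full, properly infinite projection $p\in\multialg{\I_i}$ such that $p\,\sigma_{\mathfrak{e}_i}(\E_i)\subseteq \I_i$, and then invoke \cite[Theorem~2.3]{gabe:absorb}. Because $\I_i$ is either $\K$ or a purely infinite simple $C^*$-algebra, $\corona{\I_i}$ is simple and purely infinite and every projection in $\multialg{\I_i}\setminus\I_i$ is automatically norm-full and properly infinite. Non-unitality says $\tau_{\mathfrak{e}_i}(1_{\A_i})\neq 1_{\corona{\I_i}}$, so in the simple purely infinite corona one can find a nonzero projection $q\le 1_{\corona{\I_i}}-\tau_{\mathfrak{e}_i}(1_{\A_i})$ which is Murray--von Neumann equivalent to $1_{\corona{\I_i}}$; lifting $q$ to a projection $p\in\multialg{\I_i}$ via a projection-lifting result such as \cite[Lemma~2.8]{SZ: Multiplier projections} produces the required $p$, whose cutdown property follows from $\pi(p)\,\tau_{\mathfrak{e}_i}(a)=0$ for all $a\in\A_i$.

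With absorbing established, I would form the extension $\mathfrak{f}_1$ by pushing $\mathfrak{e}_1$ forward along $\phi_0$ and $\mathfrak{f}_2$ by pulling $\mathfrak{e}_2$ back along $\phi_2$; both are extensions of $\A_1$ by $\I_2$, both inherit the absorbing property, and the universal properties of push-forward and pull-back supply $*$-isomorphisms $\alpha\colon\E_1\to\widetilde{\E}_1$ and $\beta\colon\widetilde{\E}_2\to\E_2$ compatible with the evident vertical maps. Naturality of the Busby invariant \cite[Proposition~1.1]{MK: Extensions Kirchberg} together with the hypothesis gives $[\tau_{\mathfrak{f}_1}]=[\tau_{\mathfrak{e}_1}]\times\kk(\phi_0)=\kk(\phi_2)\times[\tau_{\mathfrak{e}_2}]=[\tau_{\mathfrak{f}_2}]$ in $\kk^1(\A_1,\I_2)$. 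Since $\A_1$ is separable nuclear, this equality upgrades, after adding a common trivial summand $\sigma$ and then absorbing each $\sigma$ back into $\tau_{\mathfrak{f}_i}$ by the absorbing property, to a single unitary $U\in\multialg{\I_2}$ with $\mathrm{Ad}(\pi(U))\circ\tau_{\mathfrak{f}_1}=\tau_{\mathfrak{f}_2}$. This $U$ induces an isomorphism of extensions $\lambda_1\colon\widetilde{\E}_1\to\widetilde{\E}_2$ over the identity on $\A_1$ whose restriction $\lambda_0$ to $\I_2$ is inner and therefore $\kk$-trivial; setting $\psi_1=\beta\circ\lambda_1\circ\alpha$ and $\psi_0=\lambda_0\circ\phi_0$ then yields the desired commutative diagram with $\kk(\psi_0)=\kk(\phi_0)$. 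I expect the principal obstacle to lie in Step~1 under case~(ii), specifically in constructing the cutdown projection $p$, since this is the only place where the stringent hypotheses on $\I_1$ are actually used (through the very limited ideal structure of $\multialg{\I_i}$); once absorbing is in hand, the remainder is the standard intertwining machinery for absorbing extensions.
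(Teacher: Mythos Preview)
The Observation itself carries no separate proof in the paper; its content is simply that case~(ii) of Proposition~\ref{p: isomorphism from kk} covers the situation of \cite[Theorem~2.6]{rr_cexpure}, where the ideal is purely infinite simple. What you have written is in effect a proof of Proposition~\ref{p: isomorphism from kk}, and it matches the paper's own proof of that proposition essentially step for step: the same reduction to purely large via the corona factorization property, the same two-case split with the projection-lifting argument through \cite[Lemma~2.8]{SZ: Multiplier projections} and \cite[Theorem~2.3]{gabe:absorb} in case~(ii), and the same push-forward/pull-back followed by unitary intertwining to finish. The only cosmetic discrepancy is that the paper allows two possibly distinct trivial summands $\sigma_1,\sigma_2$ at the $\kk^1$-equivalence step rather than a single common $\sigma$; apart from this the arguments coincide.
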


\begin{observation}
In \cite[Lemma~6.13(a)]{ERS:amplified}, the first and third named author with Adam S{\o}rensen proved a similar result as \cite[Lemma~4.5]{segrer:ccfis} using \cite[Corollary~16]{EK:purelylarge}.  Although, \cite[Lemma~6.13]{ERS:amplified} is incorrect as stated, it was only applied in \cite[Theorem~6.17]{ERS:amplified} for extensions $0 \to \I \to \E \to \A \to 0$ where $\I$ is $\K$ or a purely infinite simple $C^{*}$-algebra.  Therefore, replacing \cite[Lemma~6.13(a)]{ERS:amplified} with Proposition~\ref{p: isomorphism from kk}, \cite[Theorem~6.17]{ERS:amplified} holds as stated. 
\end{observation}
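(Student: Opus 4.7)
The plan is to verify this observation by carefully auditing every use of \cite[Lemma~6.13(a)]{ERS:amplified} within the proof of \cite[Theorem~6.17]{ERS:amplified} and checking that in each instance the hypotheses of Proposition~\ref{p: isomorphism from kk}(ii) are met, so that a direct substitution of Proposition~\ref{p: isomorphism from kk} for \cite[Lemma~6.13(a)]{ERS:amplified} rescues the argument without altering the conclusion. Concretely, I would first locate the statement of \cite[Lemma~6.13(a)]{ERS:amplified} and read off its hypotheses; the lemma (as originally formulated) produces an isomorphism of extensions from KK-theoretic data under a corona factorization / fullness assumption on the ideal, i.e.\ exactly the same input data as Proposition~\ref{p: isomorphism from kk}, but without the extra hypothesis (i) or (ii) needed to ensure absorption in the non-unital-quotient case.

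Next I would trace every invocation of \cite[Lemma~6.13(a)]{ERS:amplified} inside the proof of \cite[Theorem~6.17]{ERS:amplified}. The content of the observation is that each such invocation is applied to an extension $0 \to \I \to \E \to \A \to 0$ in which $\I$ is either $\K$ or a purely infinite simple $C^{*}$-algebra. This is a structural feature of the setting of \cite[Theorem~6.17]{ERS:amplified}: the ideals appearing in the ideal lattice decomposition used there are building blocks of precisely these two types. At each site of application I would record the ambient extension, identify the ideal $\I$, and confirm that it is $\K$ or a purely infinite simple $C^{*}$-algebra; the remaining hypotheses (full extension, separable and nuclear, stable ideal with the corona factorization property, KK-compatibility of the given isomorphisms of ideal and quotient) coincide with those already verified in \cite{ERS:amplified} in order to apply \cite[Lemma~6.13(a)]{ERS:amplified}.

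Having confirmed this, the substitution is mechanical: each appeal to \cite[Lemma~6.13(a)]{ERS:amplified} is replaced by an appeal to Proposition~\ref{p: isomorphism from kk}, using clause (ii) to supply absorption. Since Proposition~\ref{p: isomorphism from kk} delivers exactly the conclusion required by the proof of \cite[Theorem~6.17]{ERS:amplified}, namely a pair of $*$-isomorphisms $\psi_{0}, \psi_{1}$ making the extension diagram commute with $\kk(\psi_{0}) = \kk(\phi_{0})$, the rest of the argument in \cite{ERS:amplified} carries over verbatim; in particular, whatever downstream bookkeeping of $\kk$-classes is done with $\psi_{0}$ proceeds unchanged.

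The main obstacle is therefore not mathematical but bibliographic: to justify the observation it is necessary to explicitly verify the casewise structural claim about the ideals $\I$ appearing in the proof of \cite[Theorem~6.17]{ERS:amplified}. I would present this verification as a short case analysis keyed to the ideal lattice used in \cite[Section~6]{ERS:amplified}, pointing to the places in that paper where it is established that the subquotients are either $\K$ or purely infinite simple. Once that cross-reference is in place, the observation is a direct corollary of Proposition~\ref{p: isomorphism from kk}(ii), with no further work required.
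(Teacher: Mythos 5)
Your proposal is correct and follows essentially the same route as the paper: the paper's entire justification is precisely the observation that every application of \cite[Lemma~6.13(a)]{ERS:amplified} in the proof of \cite[Theorem~6.17]{ERS:amplified} concerns an extension whose ideal is $\K$ or a purely infinite simple $C^{*}$-algebra, so that Proposition~\ref{p: isomorphism from kk}(ii) applies and yields the same conclusion. Your additional insistence on explicitly documenting the casewise verification of the ideal types is a reasonable elaboration of the same argument, not a different one.
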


\begin{observation}
In \cite[Theorem~4.9]{err:fullext}, the authors give a complete classification of all graph $C^{*}$-algebras with exactly one non-trivial ideal.  This result relied on \cite[Lemma~4.5]{segrer:ccfis}.  Using Proposition~\ref{p: isomorphism from kk}, \cite[Theorem~4.9]{err:fullext} is false in exactly one case.  It is false in general for the case of non-unital graph $C^{*}$-algebras $C^{*} (E)$ with exactly one non-trivial ideal $\I$ with $\I$ an AF-algebra and $C^{*} (E) / \I$ a unital purely infinite simple $C^{*}$-algebra.  Using \cite[Example~1.1]{gabe:absorb} as inspiration, one can construct two non-isomorphic, non-unital graph $C^{*}$-algebras $C^{*} (E_{1} )$ and $C^{*} (E_{2} )$ such that each $C^{*} (E_{i} )$ has exactly one non-trivial ideal $\I_{i}$, $C^{*} (E_{i} ) / \I_{i}$ is a unital, purely infinite, simple $C^{*}$-algebra, $\I_{i}$ is an AF-algebra, and $K_{\mathrm{six}} ( C^{*} (E_{1} ) ; \I_{1} ) \cong K_{\mathrm{six}} ( C^{*} (E_{2} ) ; \I_{2} )$ with an isomorphism that is a scale and order isomorphism.
\end{observation}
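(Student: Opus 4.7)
The plan is to import the mechanism of Example~\ref{ex: counter example} into the class of graph $C^{*}$-algebras: build two graphs $E_{1}$ and $E_{2}$ whose $C^{*}$-algebras sit in extensions
\[
\mathfrak{e}_{i}\colon 0 \to \I_{i} \to C^{*}(E_{i}) \to Q \to 0
\]
with a common AF ideal $\I$, a common unital, purely infinite, simple quotient $Q = O_{n}$, identical $K_{\mathrm{six}}$-invariants including scale and order, but with $\mathfrak{e}_{2}$ nuclearly absorbing and $\mathfrak{e}_{1}$ purely large yet not nuclearly absorbing. As in the last paragraph of Example~\ref{ex: counter example}, this disparity will force $C^{*}(E_{1}) \not\cong C^{*}(E_{2})$.

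\emph{Construction.} I would attach a single top vertex $w$ with $n \geq 2$ loops (giving quotient $O_{n}$) to a hereditary-saturated AF subgraph of the form $\I_{a} \oplus \I_{b}$, varying only the coupling edges from $w$ into the two summands. For $E_{2}$ the coupling would be symmetric, chosen so that both components of $1-P_{w}\in\multialg{\I_{a}}\oplus\multialg{\I_{b}}$ are norm-full and properly infinite and $(1-P_{w})\,C^{*}(E_{2}) \subseteq \I_{2}$; \cite[Theorem~2.3]{gabe:absorb} then makes $\mathfrak{e}_{2}$ nuclearly absorbing. For $E_{1}$ the coupling would be asymmetric, arranged so that on one summand $P_{w}$ already equals the identity modulo the compact part, forcing that component of $1-P_{w}$ into the ideal summand itself; then $1-P_{w}$ fails to be norm-full in $\multialg{\I_{1}}$, and a case analysis of alternative ``room'' projections via \cite[Theorem~2.1]{gabe:absorb} shows that $\mathfrak{e}_{1}$ cannot be nuclearly absorbing, mirroring the projection $(1-p) \oplus 0$ of Example~\ref{ex: counter example}.

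\emph{Invariants and non-isomorphism.} Using the gauge-invariant ideal description, each $C^{*}(E_{i})$ is non-unital (infinite vertex set), has a unique non-trivial ideal $\I_{i}$ that is AF, and has quotient $O_{n}$. Since $K_{1}(\I_{i}) = 0$, the six-term sequence collapses to a short exact sequence $0 \to K_{0}(\I_{i}) \to K_{0}(C^{*}(E_{i})) \to K_{0}(O_{n}) \to 0$ with $K_{1}(C^{*}(E_{i})) = 0$, and the coupling multiplicities can be tuned to make this sequence identical for $i = 1, 2$, with matching scales and orders. For the non-isomorphism, I imitate the last paragraph of Example~\ref{ex: counter example}: any $*$-isomorphism $\ftn{\phi}{C^{*}(E_{1})}{C^{*}(E_{2})}$ sends $\I_{1}$ to a proper ideal of $C^{*}(E_{2})$, and since $C^{*}(E_{2})$ has only the one non-trivial ideal $\I_{2}$, $\phi(\I_{1}) = \I_{2}$; hence $\phi$ descends to an isomorphism of extensions $\mathfrak{e}_{1} \to \mathfrak{e}_{2}$, contradicting the mismatch in absorption.

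The principal obstacle is the explicit graph design for $E_{1}$: producing, by purely combinatorial data, an extension whose lifted multiplier projection reproduces the degenerate-summand behaviour of $(1-p) \oplus 0$ in Example~\ref{ex: counter example}, while still yielding an AF ideal, a purely infinite simple quotient, and a prescribed $K_{\mathrm{six}}$-invariant matching that of $E_{2}$. Accomplishing this requires an explicit description of the multipliers of AF graph algebras in terms of paths, together with a careful choice of coupling multiplicities exploiting the precise failure condition for absorption recorded in \cite[Theorem~2.1]{gabe:absorb}.
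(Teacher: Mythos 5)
Your overall strategy --- manufacture two graph-algebra extensions with matching invariants whose non-isomorphism is detected by the absorption dichotomy of Example~\ref{ex: counter example} --- is the intended one (the paper itself offers no more detail than ``use \cite[Example~1.1]{gabe:absorb} as inspiration''), and your closing non-isomorphism argument via uniqueness of the ideal is correct, indeed cleaner than the corresponding step in Example~\ref{ex: counter example}. However, your proposed construction has a structural flaw: you take the ideal to be a direct sum $\I_{a} \oplus \I_{b}$ of two AF algebras. For $C^{*}$-algebras a closed two-sided ideal of a closed two-sided ideal is again an ideal of the ambient algebra, so $\I_{a} \oplus 0$ and $0 \oplus \I_{b}$ would be non-trivial ideals of $C^{*}(E_{i})$ distinct from $\I_{i}$. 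This destroys the hypothesis ``exactly one non-trivial ideal'' that the Observation (and \cite[Theorem~4.9]{err:fullext}) requires; in fact that hypothesis forces $\I_{i}$ to be a \emph{simple} AF algebra.

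This is not cosmetic, because the entire mechanism you import from Example~\ref{ex: counter example} --- the projection $(1-p) \oplus 0$ whose second summand vanishes --- lives on the two-summand decomposition of $\K \oplus \K$ and is unavailable once the ideal is simple. Moreover, by case (ii) of Proposition~\ref{p: isomorphism from kk} the simple AF ideal cannot be $\K$ (nor any $M_{n}$, which would force a unital direct summand), so the counterexample must use a non-elementary simple AF ideal, say one carrying a faithful densely defined trace, and the obstruction to nuclear absorption must come from a different source: one must arrange that every projection in $\multialg{\I_{1}}$ lying under a lift of $1_{\corona{\I_{1}}} - \tau_{\mathfrak{e}_{1}}(1_{\A_{1}})$ has finite trace, hence is not properly infinite, so that the criterion of \cite[Theorem~2.3]{gabe:absorb} (not Theorem~2.1, which governs non-unital quotients) fails. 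As written, your sketch cannot produce the objects the Observation asserts; it needs to be redesigned around a simple, non-elementary AF ideal.
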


\def\cprime{$'$}


\begin{thebibliography}{10}


\bibitem{ERRshift}
{\sc S.~Eilers, G.~Restorff, and E.~Ruiz},  {\em Classification of
  extensions of classifiable {$C^\ast$}-algebras}, Adv. Math., \textbf{222} (2009),
  pp.~2153--2172.

\bibitem{ERRlinear}
{\sc S.~Eilers, G.~Restorff, and E.~Ruiz}, {\em On graph
  {$C^*$}-algebras with a linear ideal lattice}, Bull. Malays. Math. Sci. Soc.
  (2), \textbf{33} (2010), pp.~233--241.

 \bibitem{segrer:ccfis}
{\sc S.~Eilers, G.~Restorff, and E.~Ruiz}, {\em Classifying {$C^*$}-algebras
  with both finite and infinite subquotients},  J. Funct. Anal. \textbf{265} (2013), no. 3, 449--468.
  
   \bibitem{err:fullext}
{\sc S.~Eilers, G.~Restorff, and E.~Ruiz}, {\em The ordered
  ${K}$-theory of a full extension},  Canad. J. Math. \textbf{66} (2014), no. 3, 596--625.
  
\bibitem{ERS:amplified}
{\sc S.~Eilers, E.~Ruiz, and A.~S{\o}rensen} {\em Amplified graph {$C^{*}$}-algebras}, M\"unster J. Math. \textbf{5} (2012), 121--150.  
  
\bibitem{EK:purelylarge}
{\sc G.~A. Elliott and D.~Kucerovsky}, {\em An abstract {V}oiculescu-{B}rown-{D}ouglas-{F}illmore absorption
theorem}, Pacific J. Math., \textbf{198} (2001), 385--409.

\bibitem{gabe:absorb} 
{\sc J.~Gabe}, {\em A note on non-unital absorbing extensions}, preprint arXiv:1408.4033.

\bibitem{rr_cexpure}
{\sc G.~Restorff and E.~Ruiz}, {\em On {R}\o rdam's classification of certain
  {$C\sp *$}-algebras with one non-trivial ideal {II}}, Math. Scand., \textbf{101}
  (2007), pp.~280--292.

\bibitem{MK: Extensions Kirchberg}
{\sc M.~R{\o}rdam}, {\em Classi??fcation of extensions of certain {$C^{*}$}-algebras by their six term exact sequences in {$K$}-theory}, Math. Ann. \textbf{308} (1997), 93--117.

\bibitem{SZ: Multiplier projections}
{\sc S.~Zhang}, {\em $K_1$-groups, quasidiagonality, and interpolation by multiplier projections}, Trans. Amer. Math. Soc. \textbf{325} (1991), no. 2, 793--818.
\end{thebibliography}
\end{document}